\theoremstyle{plain}
\newtheorem{thm}{Theorem}
\newtheorem{lem}[thm]{Lemma}
\newtheorem{prop}[thm]{Proposition}
\newtheorem{fact}[thm]{Fact}
\newtheorem*{thm*}{Theorem}
\theoremstyle{remark}
\newtheorem{rmk}[thm]{Remark}
\theoremstyle{definition}
\newtheorem{defn}{Definition}
\numberwithin{equation}{section}
\newenvironment{PfofThmMain}[1]
{\par\vskip2\parsep\noindent{\sc Proof of Theorem\ \ref{Thm:MainResult}.}}{{\hfill
$\Box$}
\par\vskip2\parsep}
\def\J{\mathcal{J}}
\def\Z{\mathbb Z}
\def\R{\mathbb R}
\newcommand{\mbwhy}[1]{{\textcolor{red}
{MBwhy: #1}}}
\newcommand{\jb}[1]{{\textcolor{Mahogany}
{JB: #1}}} 
\newcommand{\jbwhy}[1]{\marginpar{\tiny\textcolor{Mahogany}{[\sl
{JB: #1}\rm]}}}
\title[Bowen's entropy-conjugacy conjecture]{Bowen's entropy-conjugacy conjecture is true up to finite index}
\author{Mike Boyle}
\address{Department of Mathematics - University of Maryland}
\email{mmb@math.umd.edu}
\author{J\'er\^ome Buzzi}
\address{Laboratoire de Math\'ematiques d'Orsay - Universit\'e Paris-Sud}
\email{jerome.buzzi@math.u-psud.fr}
\author{Kevin McGoff}
\address{Department of Mathematics - Duke University}
\email{mcgoff@math.duke.edu}
\begin{document}

\keywords{symbolic dynamics; subshifts of finite type; topological entropy; dimensional entropy; entropy-conjugacy; topological conjugacy; left ideal class; resolving maps}

\subjclass{Primary 37A35; Secondary 37B10, 37C45}

\begin{abstract}
For a topological dynamical system $(X,f)$, consisting of a continuous map $f : X \to X$, and a (not necessarily compact) set $Z \subset X$, Bowen \cite{Bowen1973} defined a dimension-like version of  entropy,  $h_X(f,Z)$. In the same work, he introduced a notion of entropy-conjugacy for pairs of invertible compact systems: the systems $(X,f)$ and $(Y,g)$ are \textit{entropy-conjugate} if there exist invariant Borel sets $X' \subset X$ and $Y' \subset Y$ such that $h_X(f,X\setminus X') < h_X(f,X)$, $h_Y(g,Y \setminus Y') < h_Y(g,Y)$, and $(X',f|_{X'})$ is topologically conjugate to $(Y',g|_{Y'})$. Bowen conjectured that two mixing shifts of finite type are entropy-conjugate if they have the same entropy. We prove that two  mixing shifts of finite type with equal entropy and left ideal class are entropy-conjugate. Consequently, in every entropy class Bowen's conjecture is true up to finite index.
\end{abstract}

\maketitle

\section{Introduction} \label{Sect:Introduction}

It is well-known that within many classes of dynamical systems, entropy characterizes most of the dynamics. The prime example is Ornstein's complete classification of Bernoulli schemes by their entropy up to measure-preserving isomorphisms: these are Borel conjugacies after discarding subsets of zero measure for a single distinguished invariant probability measure. It has been discovered since that in many settings, one can obtain \emph{more regular} conjugacy and/or  \emph{smaller} discarded subsets.

For instance, the notion of probabilistic entropy-conjugacy introduced in \cite{Buzzi1997} (under the name of entropy-conjugacy) neglects subsets that are negligible with respect to all invariant, ergodic probability measures with entropy close to the maximum and allows the classification of interval dynamics among others (see \cite{Buzzi2005}). Hochman \cite{Hochman} has shown that this can often be strengthened to Borel conjugacies neglecting only sets of zero measure for all non-atomic invariant, ergodic probability measures.

A classical result \cite{AdlerMarcus} of Adler and Marcus shows that mixing shifts of finite type 
are topologically conjugate after restriction to doubly transitive sequences (see \cite{BBG} for an extension to the non-compact case).  Looking at their proof, one can see that they 
produce a probabilistic entropy-conjugacy, to which the previously mentioned result of Hochman applies. The Borel conjugacies thus obtained are not homeomorphisms. Hence a natural question is, how far can the discarded set be reduced while keeping the continuity of the conjugacy?

Bowen introduced a dimension-like notion of entropy $h_X(f,Y)$ for arbitrary subsets $Y\subset X$ (see \cite{Misiurewicz,Pesin} for recent discussions of this notion) and used it to 
define another version of entropy-conjugacy, as follows. When necessary, we will 
refer to Bowen's version as  \textit{topological entropy-conjugacy}, to distinguish it 
from the probabilistic version of \cite{Buzzi1997}.

\begin{defn}\cite{Bowen1973}
Suppose $(X,f)$ and $(Y,g)$ are topological dynamical 
systems (homeomorphisms of compact metrizable spaces). 
 Then $(X,f)$ is \textbf{ (topologically) entropy-conjugate} to $(Y,g)$ if there exist Borel subsets $X' \subset X$ and $Y' \subset Y$ such that the following conditions are satisfied:
\begin{enumerate}
\item $f(X') \subset X'$ and $g(Y') \subset Y'$;
\item $h_X(f, X \setminus X') < h_X(f,X)$ and $h_Y(g, Y \setminus Y') < h_Y(g,Y)$; 
\item $(X',f|_{X'})$ is topologically conjugate to $(Y',g|_{Y'})$.
\end{enumerate}
\end{defn}

To the best of our knowledge, it is unknown whether entropy-conjugacy is transitive. It is stronger than probabilistic entropy-conjugacy, since  $h_X(f,X\setminus X')$ bounds the entropy of any ergodic, invariant measure $\mu$ carried on $X\setminus X'$  \cite[Prop. 1]{Bowen1973}.

Bowen showed that entropy-conjugate homeomorphisms of compact metrizable spaces have the same topological entropy. His paper includes a single conjecture: mixing shifts of finite type (SFTs) with the same topological entropy must be entropy-conjugate. Given the continuing influence of Bowen's paper (e.g. see \cite{Misiurewicz}) and the depth of his intuition, it seems to us that his conjecture merits attention. 

Let us also mention a related question of Hochman \cite[Problem 1.9]{Hochman}. Let $X$ and $Y$ be mixing SFTs on finite alphabets with $h(X)=h(Y)$. Let $X'$ and $Y'$ denote the sets obtained by removing all periodic points from $X$ and $Y$, respectively. Is there a topological conjugacy between the (non-compact) systems $X'$ and $Y'$?

We will show that if two mixing SFTs have the same entropy and the same left ideal class, then they are entropy-conjugate. Because there are finitely many ideal classes within a given entropy class \cite{BMT}, it follows that in every entropy class Bowen's conjecture is true up to finite index.
It remains open as to whether Bowen's original conjecture holds.

We state our main results precisely in the next two theorems.
For a mixing SFT $(X,\sigma_X)$, denote by $\J(X)$ the left ideal class of $(X,\sigma_X)$, and denote by $h(X)$ the topological entropy of $(X,\sigma_X)$ (see Section \ref{Sect:Preliminaries} for definitions).


\begin{thm} \label{Thm:MainResult}
Let $(X,\sigma_X)$ and $(Y,\sigma_Y)$ be mixing SFTs. If $h(X) = h(Y)$ and $\J(X)=\J(Y)$, then $(X,\sigma_X)$ is entropy-conjugate to $(Y,\sigma_Y)$.
\end{thm}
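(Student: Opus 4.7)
The plan is to produce, from the joint hypotheses $h(X) = h(Y)$ and $\J(X) = \J(Y)$, a common resolving refinement of $X$ and $Y$, and then use magic-word technology to cut out invariant Borel subsets on which the two factor maps simultaneously become injective. The resulting restrictions will be topologically conjugate, and the removed sets will have strictly sub-maximal Bowen entropy, since mixing forces proper closed shift-invariant subshifts to have strictly smaller topological entropy.

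The first step is to construct a mixing SFT $Z$ equipped with right-resolving factor maps $\phi : Z \to X$ and $\psi : Z \to Y$. This is where the hypothesis $\J(X) = \J(Y)$ enters: the left ideal class is the classical obstruction to matched resolving factor maps between mixing SFTs of equal entropy (cf.\ \cite{BMT}). Matching the ideal classes should allow one to build $Z$ so that $\phi$ and $\psi$ both admit magic words and have their degrees coordinated, which is a prerequisite for the combined injectivity in the next step.

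The second step uses magic words. For each of $\phi, \psi$ let $w_X$ (respectively $w_Y$) be a magic word in $X$ (respectively $Y$), and let $X_0 \subset X$, $Y_0 \subset Y$ be the sets of points whose orbits never visit the cylinders of $w_X, w_Y$. Each of $X_0, Y_0$ is a proper closed shift-invariant subshift of a mixing SFT, hence has topological entropy strictly less than $h(X) = h(Y)$. Let $Z^* \subset Z$ consist of those $z$ whose orbit meets both $\phi^{-1}(w_X)$ and $\psi^{-1}(w_Y)$ and on which the branches of $\phi$ and $\psi$ picked out by those magic-word occurrences coincide. Put $X' = \phi(Z^*)$ and $Y' = \psi(Z^*)$; then $\phi|_{Z^*}$ and $\psi|_{Z^*}$ are continuous injections onto their images, and the composition $\psi \circ (\phi|_{Z^*})^{-1}$ is the desired topological conjugacy $X' \to Y'$.

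The third step is the Bowen entropy inequality, and it is where I expect the principal difficulty to lie. The complements $X \setminus X'$ and $Y \setminus Y'$ are each contained in the union of $X_0$ (respectively $Y_0$) with a continuous finite-to-one image of a proper closed subshift of $Z$, so they lie in closed proper shift-invariant subsystems of strictly smaller topological entropy; Bowen's general bound then yields the required sub-maximal dimensional entropy. The real obstacle is the simultaneous injectivity in Step~2: if $\phi, \psi$ have degrees greater than one, one must force the branch of $\phi^{-1}$ selected by $w_X$ to agree with the branch of $\psi^{-1}$ selected by $w_Y$ on a shift-invariant set whose complement is small in the Bowen-entropy sense. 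This is precisely where the equal ideal class hypothesis must do genuine work, and getting this combinatorial coordination right is the crux of the argument.
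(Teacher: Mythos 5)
Your skeleton --- a common extension supplied by the ideal-class hypothesis, magic words, and deletion of an invariant Borel set off of which the factor maps become injective --- is the same as the paper's (which invokes \cite[Theorem 7.1]{BMT} to get the extension). But two steps have genuine gaps. The more serious one is your entropy estimate in Step 3. The set that must be removed from $Z$ (hence from $X$ and $Y$) is the set of points whose \emph{forward} orbit meets the magic word only \emph{finitely often}; merely removing points that \emph{never} see it (your $X_0$) does not make the factor maps injective. That larger set is dense in $Z$ --- any word extends to a point that eventually avoids the magic symbol --- so it is \emph{not} contained in any proper closed shift-invariant subsystem, and your claim that the complements ``lie in closed proper shift-invariant subsystems of strictly smaller topological entropy'' is false. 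The correct argument writes the bad set as a countable union over $n_0$ of the sets of points avoiding the magic symbol on $[n_0,\infty)$, bounds the $n$-word count of each piece by $p(n)\lambda^{\max(0,n_0)}\beta^n$ with $\log\beta$ the (strictly smaller) entropy of the subshift forbidding the magic symbol, and then uses Bowen's countable stability $h_X(f,\cup_i Y_i)=\sup_i h_X(f,Y_i)$ from Proposition \ref{Prop:Basic}. Without that union property the estimate does not go through.

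Second, the ``simultaneous injectivity'' you identify as the crux is a non-issue once you use the full strength of the hypothesis: \cite[Theorem 7.1]{BMT} gives maps that are \emph{almost invertible} and \emph{left-closing}, so by Fact \ref{Fact:MagicSymbol} each has, after recoding, a magic symbol with $d^*_\pi=1$ --- there are no branches to coordinate. Each map is then separately injective on the set of $z$ whose forward orbit hits its magic symbol infinitely often: the magic occurrences determine the preimage on a forward tail, and the left-closing property propagates this determination to all of $z$. One simply intersects the two good sets inside $Z$ and pushes down. (If you insist on right-resolving maps, the roles of past and future swap, and you must require infinitely many magic occurrences in backward time instead.) What your argument does need and omits entirely is continuity of the inverse on the non-compact good set --- a continuous bijection there need not be a homeomorphism; this is supplied by the uniform window $K$ in the left-closing property, not by injectivity alone.
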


It is known from \cite[Theorem 5.13]{BMT} that for a given $h\geq 0$, 
\begin{equation} \label{Eqn:FiniteCard}
\#\{\J(X) : (X,\sigma_X) \text{ is a mixing SFT with } h(X) = h\} < \infty.
\end{equation} 
As a consequence of Theorem \ref{Thm:MainResult} and (\ref{Eqn:FiniteCard}), we obtain the following theorem.

\begin{thm} \label{Thm:BowenConj}
For a given $h \geq 0$, the set of mixing SFTs with entropy $h$ may be partitioned into finitely many classes $C_1, \dots, C_N$ such that if $(X,\sigma_X)$ and $(Y,\sigma_Y)$ are in $C_i$ for some $i$, then $(X,\sigma_X)$ is entropy-conjugate to $(Y,\sigma_Y)$.
\end{thm}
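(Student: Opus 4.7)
The plan is to obtain Theorem \ref{Thm:BowenConj} as a direct corollary of Theorem \ref{Thm:MainResult} combined with the finiteness statement (\ref{Eqn:FiniteCard}) from \cite[Theorem 5.13]{BMT}. The idea is simply to use the left ideal class $\J(X)$ as the label defining the partition, noting that within a fixed entropy class only finitely many such labels occur.

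Concretely, fix $h\geq 0$ and let $\mathcal{S}_h$ denote the collection of mixing SFTs $(X,\sigma_X)$ with $h(X)=h$. By (\ref{Eqn:FiniteCard}), the set $\{\J(X) : (X,\sigma_X)\in\mathcal{S}_h\}$ is finite; enumerate its elements as $\J_1,\dots,\J_N$. For each $i\in\{1,\dots,N\}$, define
\[
C_i \;=\; \bigl\{(X,\sigma_X)\in\mathcal{S}_h \,:\, \J(X) = \J_i\bigr\}.
\]
Since every element of $\mathcal{S}_h$ has exactly one left ideal class, the sets $C_1,\dots,C_N$ are pairwise disjoint and their union is $\mathcal{S}_h$, giving the desired partition into finitely many classes.

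It remains to verify the entropy-conjugacy property within each $C_i$. If $(X,\sigma_X)$ and $(Y,\sigma_Y)$ both lie in $C_i$, then by construction $h(X)=h=h(Y)$ and $\J(X)=\J_i=\J(Y)$. Theorem \ref{Thm:MainResult} then applies directly and yields that $(X,\sigma_X)$ is entropy-conjugate to $(Y,\sigma_Y)$, as required.

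Thus the ``main obstacle'' does not appear in this deduction itself, which is purely organizational: the genuine content is hidden in the two ingredients being invoked, namely the proof of Theorem \ref{Thm:MainResult} (where the entropy-conjugacy is actually constructed once the ideal-class invariant agrees) and the finiteness result \cite[Theorem 5.13]{BMT} (which guarantees that the partition is finite). No additional transitivity-type property of entropy-conjugacy is needed here, which is fortunate since, as the authors note, transitivity of entropy-conjugacy is not known; we only use it as an equivalence relation on each individual $C_i$ via the pairwise statement of Theorem \ref{Thm:MainResult}.
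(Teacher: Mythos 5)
Your proposal is correct and is exactly the argument the paper intends: partition the entropy-$h$ mixing SFTs by their left ideal class, invoke \cite[Theorem 5.13]{BMT} (i.e.\ (\ref{Eqn:FiniteCard})) for finiteness of the partition, and apply Theorem \ref{Thm:MainResult} pairwise within each class. Nothing further is needed, and your remark that no transitivity of entropy-conjugacy is used is a correct and worthwhile observation.
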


For some entropies (for example, entropy $\text{log}\, n$ with $n\in \mathbb N$), there is only one ideal class; in these classes, Bowen's conjecture is true.



In Section \ref{Sect:Preliminaries}, we provide some necessary definitions and background. Section \ref{Sect:Proof} is devoted to proving Theorem \ref{Thm:MainResult}.

\section{Preliminaries} \label{Sect:Preliminaries}

Two topological dynamical systems $(X_1,f_1)$ and $(X_2,f_2)$ are said to be topologically conjugate if there exists a homeomorphism $\pi : X_1 \to X_2$ such that $f_2 \circ \pi = \pi \circ f_1$.



Let us now recall Bowen's definition of entropy for non-compact sets. Suppose $(X,f)$ is a topological dynamical system. For a finite open cover $\mathcal{U}$ of $X$ and $E \subset X$, we write $E \prec \mathcal{U}$ if $E$ is contained in an element of $\mathcal{U}$. Also, let 
\begin{equation*}
 n_{\mathcal{U}}(E) = \left\{ \begin{array}{ll}
                               0, & \text{ if } E \nprec \mathcal{U}, \\
                               \sup\{ k : f^k(E) \prec \mathcal{U} \}, & \text{ otherwise}.
                              \end{array}
                      \right.
\end{equation*}
For $\lambda \in \R$, define a measure $m_{\mathcal{U},\lambda}$ on Borel sets $Y \subset X$ by
\begin{equation*}
 m_{\mathcal{U},\lambda}(Y) = \lim_{N \to \infty} \inf \biggl\{ \sum_{i} \exp( - \lambda n_{\mathcal{U}}(E_i) ) : \bigcup_i E_i \supset Y, \, \inf_i n_{\mathcal{U}}(E_i) \geq N \biggr\}.
\end{equation*}
Note that if $\lambda > \lambda'$, then $m_{\mathcal{U},\lambda}(Y) \leq m_{\mathcal{U},\lambda'}(Y)$, and there exists at most one $\lambda$ such that $m_{\mathcal{U},\lambda}(Y) \notin \{ 0, \infty\}$. We may therefore define
\begin{equation*}
 h_{\mathcal{U}}(f,Y) = \inf\{ \lambda : m_{\mathcal{U},\lambda}(Y) = 0\}.
\end{equation*}
Then let
\begin{equation*}
 h_X(f,Y) = \sup\{ h_{\mathcal{U}}(f,Y) : \mathcal{U} \text{ is a finite open cover of } X\}.
\end{equation*}
Bowen showed in the case that $X$ is compact metrizable that the usual topological entropy $h(f)$ equals $ h_X(f,X)$. 
We recall next other basic facts about Bowen's  entropy.


\begin{prop} \label{Prop:Basic}\cite[Proposition 2]{Bowen1973}
\begin{enumerate}
 \item If $Y \subset X_1$ and  $\pi$ is a topological conjugacy from  $(X_1,f_1)$ to  $(X_2,f_2)$, then $h_{X_1}(f_1,Y) = h_{X_2}(f_2,\pi(Y))$;
 \item $h_X(f,\cup_{i=1}^{\infty} Y_i) = \sup_i h_X(f,Y_i)$;
 \item $h_X(f^m,Y) = m h_X(f,Y)$ for $m >0$. 
\end{enumerate}
\end{prop}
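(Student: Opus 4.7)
The plan is to verify each of the three statements directly from the definitions, moving up the hierarchy $n_{\mathcal{U}} \to m_{\mathcal{U},\lambda} \to h_{\mathcal{U}} \to h_X$ in turn.

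For part (1), I would exploit that a topological conjugacy $\pi : X_1 \to X_2$ induces a bijection between finite open covers via $\mathcal{U} \mapsto \pi(\mathcal{U})$. The intertwining $f_2^k \circ \pi = \pi \circ f_1^k$, combined with the fact that $\pi$ is a bijection on subsets, immediately yields $n_{\pi(\mathcal{U})}(\pi(E)) = n_{\mathcal{U}}(E)$ for every $E \subset X_1$. Thus any family $\{E_i\}$ covering $Y$ with $\inf_i n_{\mathcal{U}}(E_i) \geq N$ transports to a family $\{\pi(E_i)\}$ covering $\pi(Y)$ with the same cutoff and identical summands $\exp(-\lambda n_{\mathcal{U}}(E_i))$. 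Hence $m_{\pi(\mathcal{U}),\lambda}(\pi(Y)) = m_{\mathcal{U},\lambda}(Y)$; taking infima over covers of $Y$ and then suprema over $\mathcal{U}$, together with the bijection of covers, gives (1).

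For part (2), the inequality $\sup_i h_X(f, Y_i) \leq h_X(f, \cup_i Y_i)$ follows from monotonicity of $m_{\mathcal{U},\lambda}$ in its set argument. The reverse inequality is what I would really work for: fix any $\lambda > \sup_i h_X(f, Y_i)$ and any finite open cover $\mathcal{U}$, so that $m_{\mathcal{U},\lambda}(Y_i) = 0$ for every $i$. Given $\varepsilon > 0$ and $N$, choose for each $i$ a cover $\{E_{i,j}\}_j$ of $Y_i$ with $\inf_j n_{\mathcal{U}}(E_{i,j}) \geq N$ and $\sum_j \exp(-\lambda n_{\mathcal{U}}(E_{i,j})) < \varepsilon / 2^i$. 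Concatenating these families yields a cover of $\cup_i Y_i$ respecting the cutoff $N$ and with total weight below $\varepsilon$. This is exactly countable subadditivity $m_{\mathcal{U},\lambda}(\cup_i Y_i) = 0$, so $h_{\mathcal{U}}(f, \cup_i Y_i) \leq \lambda$ for every $\mathcal{U}$, hence $h_X(f, \cup_i Y_i) \leq \lambda$, and letting $\lambda$ decrease to the supremum closes the argument.

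For part (3), the core step is a comparison between $n_{\mathcal{U}}^{f^m}(E)$ and $n_{\mathcal{U}}^{f}(E)$. Reading $n_{\mathcal{U}}$ in the standard way, the orbit segment of $f^m$ samples every $m$-th iterate of $f$, and one obtains $n_{\mathcal{U}}^{f^m}(E) = \lfloor n_{\mathcal{U}}^{f}(E) / m \rfloor$, so that $m \cdot n_{\mathcal{U}}^{f^m}(E) \leq n_{\mathcal{U}}^{f}(E) \leq m(n_{\mathcal{U}}^{f^m}(E) + 1)$. Therefore $\exp(-\lambda n_{\mathcal{U}}^{f}(E))$ and $\exp(-m\lambda \cdot n_{\mathcal{U}}^{f^m}(E))$ differ by a multiplicative constant depending only on $m$ and $\lambda$. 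The same covers $\{E_i\}$ then witness finiteness of $m_{\mathcal{U},\lambda}$ for $f$ and of $m_{\mathcal{U},m\lambda}$ for $f^m$ simultaneously, giving $h_{\mathcal{U}}(f^m, Y) = m \cdot h_{\mathcal{U}}(f, Y)$; a supremum over $\mathcal{U}$ then yields the claim. I expect part (3) to be the main obstacle, since one must verify that the additive discrepancy between $n_{\mathcal{U}}^{f}$ and $m \cdot n_{\mathcal{U}}^{f^m}$ propagates harmlessly through the outer-measure construction; this is handled precisely by the $N \to \infty$ limit, which swallows the bounded multiplicative factor and leaves only the linear scaling of the critical exponent.
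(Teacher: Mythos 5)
Parts (1) and (2) of your proposal are correct and routine (note that the paper itself does not prove this proposition --- it quotes it from Bowen's Proposition~2, so the comparison here is with Bowen's original argument, which for (1) and (2) runs along the same lines as yours: conjugacy transports covers, the quantities $n_{\mathcal{U}}$, and hence the outer measures $m_{\mathcal{U},\lambda}$; and (2) is monotonicity together with countable subadditivity of $m_{\mathcal{U},\lambda}$ via the $\varepsilon/2^i$ device, exactly as you wrote).

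Part (3), which you correctly flag as the main obstacle, contains a genuine gap. The identity $n_{\mathcal{U}}^{f^m}(E)=\lfloor n_{\mathcal{U}}^{f}(E)/m\rfloor$ is false: from $f^jE\prec\mathcal{U}$ for $0\le j\le n$ one deduces $(f^m)^jE\prec\mathcal{U}$ for $0\le j\le\lfloor n/m\rfloor$, so only the inequality $n_{\mathcal{U}}^{f^m}(E)\ge\lfloor n_{\mathcal{U}}^{f}(E)/m\rfloor$ holds; there is no upper bound of this shape, because $n_{\mathcal{U}}^{f^m}$ sees only the iterates $f^{mj}E$ and says nothing about the intermediate ones. (For example, if $f^mE=E\prec\mathcal{U}$ but $fE$ is contained in no element of $\mathcal{U}$, then $n_{\mathcal{U}}^{f^m}(E)=\infty$ while $n_{\mathcal{U}}^{f}(E)=0$.) As a result your argument proves only $h_X(f^m,Y)\le m\,h_X(f,Y)$. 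The reverse inequality needs an extra idea that does not appear in your proposal: given $\mathcal{U}$, pass to the refined cover $\mathcal{U}_m=\mathcal{U}\vee f^{-1}\mathcal{U}\vee\cdots\vee f^{-(m-1)}\mathcal{U}$. If $(f^m)^jE\prec\mathcal{U}_m$ for $0\le j\le k$, then $f^iE\prec\mathcal{U}$ for $0\le i\le mk+m-1$, whence $n_{\mathcal{U}}^{f}(E)\ge m\,n_{\mathcal{U}_m}^{f^m}(E)$; any cover of $Y$ admissible for $(f^m,\mathcal{U}_m)$ at cutoff $N$ is then admissible for $(f,\mathcal{U})$ at cutoff $mN$ with $\exp(-\lambda n_{\mathcal{U}}^{f}(E))\le\exp(-m\lambda\,n_{\mathcal{U}_m}^{f^m}(E))$, so $m_{\mathcal{U},\lambda}(Y)\le m_{\mathcal{U}_m,m\lambda}(Y)$ and $h_{\mathcal{U}}(f,Y)\le\frac{1}{m}h_{\mathcal{U}_m}(f^m,Y)\le\frac{1}{m}h_X(f^m,Y)$; taking the supremum over $\mathcal{U}$ closes the argument. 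This refinement step is the crux of Bowen's proof of (3) and must be added for your part (3) to stand.
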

Despite (3) above, note that $h_X(f^{-1},Y)$ and $h_X(f,Y)$ are not necessarily equal.

We now turn our attention to shifts of finite type. Let $\mathcal{A}$ be a finite set. Let $\Sigma(\mathcal{A}) = \mathcal{A}^{\Z}$, which we endow with the product topology inherited from the discrete topology on $\mathcal{A}$, and define a homeomorphism $\sigma : \Sigma(\mathcal{A}) \to \Sigma(\mathcal{A})$ by $\sigma(x)_n = x_{n+1}$. For $x$ in $\Sigma(\mathcal{A})$ and integers $i \leq j$, let $x[i,j] = x_i \dots x_j$. 
For a square matrix $A = (A(i,j))_{i,j \in \mathcal{A}}$ with entries in $\{0,1\}$ and no zero rows or columns, define
\begin{equation*}
X_A = \{ x \in \Sigma(\mathcal{A}) : \forall i \in \Z, A(x_i, x_{i+1}) =1 \}.
\end{equation*}
We say a dynamical system $(X,f)$ is a shift of finite type (SFT) if there exists an alphabet $\mathcal{A}$ and a matrix $A$ as above such that $(X,f)$ is topologically conjugate to $(X_A,\sigma_A)$. We may write $(X,\sigma_X)$ or simply $X$ to refer to an SFT $(X,\sigma|_{X})$. An SFT $X$ is \textit{mixing} if for any two non-empty open sets $U,V \subset X$, there exists $N \geq 0$ such that if $ n \geq N$, then $\sigma^{-n}(U) \cap V \neq \emptyset$. If $X = X_A$, then $X$ is mixing if and only if there exists $n$ such that each entry of $A^n$ is positive.


Suppose $X$ and $Y$ are SFTs. A continuous map $\pi : X \to Y$ is a \textit{factor map} if $\pi$ is surjective and $\sigma_Y \circ \pi = \pi \circ \sigma_X$. A factor map $\pi : X \to Y$ is called \textit{left-closing} if $\pi$ never collapses forward-asymptotic points, \textit{i.e.} if whenever $\pi(x) = \pi(x')$ and there exists $n \in \Z$ such that $x[n,\infty) = x'[n,\infty)$, then it follows that $x = x'$. In other words, $\pi$ is injective on stable sets.

A point $x$ in an SFT $X$ is said to be \textit{doubly transitive} if every block in $X$ appears infinitely often in both $x[0,\infty)$ and $x(-\infty,0]$. Suppose $\pi : X \to Y$ is a factor map between mixing SFTs. We say that $\pi$ is \textit{almost invertible} if every doubly-transitive point in $Y$ has exactly one pre-image. 

Suppose $(X,\sigma_X)$ and $(Y,\sigma_Y)$ are SFTs on alphabets $\mathcal{A}$ and $\mathcal{B}$, respectively. A factor map $\pi : X \to Y$ is called a \textit{one-block code} if there exists $\Phi : \mathcal{A} \to \mathcal{B}$ such that $\pi(x)_n = \Phi(x_n)$ for all $n$ in $\Z$. Suppose $\pi : X \to Y$ is a one-block code. Let $w = w_1\dots w_m$ be an $m$-block in $Y$ and $1 \leq i \leq m$. Define $d^*_{\pi}(w,i)$ to be the number of symbols $a$ in $\mathcal{A}$ such that there is a word $u = u_1 \dots u_m$ in $X$ such that $\Phi(u) = w$ and $u_i = a$. Let $d^*_{\pi} = \min \{ d_{\pi}^*(w,i) \}$. Then a \textit{magic word} is a word $w$ in $Y$ such that $d_{\pi}^*(w,i) = d_{\pi}^*$ for some $i$. By passing to conjugate SFTs, we may assume without loss of generality that $w$ is a magic symbol, \textit{i.e.} $|w| = 1$ (see \cite[Proposition 9.1.7]{LindMarcus}). If $w$ is a magic symbol, then we may assume without loss of generality that $w \in \mathcal{A} \cap \mathcal{B}$. We require the following fact (for reference, see \cite[Theorems 9.1.11 and 9.2.2]{LindMarcus}).
\begin{fact} \label{Fact:MagicSymbol}
Suppose $X$ and $Y$ are mixing SFTs. Then a one-block factor map $\pi : X \to Y$ is almost invertible if and only if $d_{\pi}^* = 1$.
\end{fact}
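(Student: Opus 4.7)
The plan is to prove the equivalence via the intermediate claim that $|\pi^{-1}(y)| = d^*_\pi$ for every doubly transitive $y \in Y$; the claimed equivalence is then immediate from the definition of almost invertibility. (The argument implicitly relies on $\pi$ being finite-to-one, which is automatic in the paper's applications, where $X$ and $Y$ will be mixing SFTs of equal entropy.) By the reduction recalled just before the statement, I may assume the magic word is a magic \emph{symbol} $a \in \mathcal{A} \cap \mathcal{B}$, so that $d^*_\pi = d^*_\pi(a,1)$.

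The first step is an extension lemma: $d^*_\pi(w, j) = d^*_\pi$ for every word $w$ in $Y$ and every position $j$ with $w_j = a$. The inequality $\geq$ is the minimality in the definition of $d^*_\pi$; for $\leq$, each preimage word of $w$ restricts at position $j$ to some preimage symbol of $a$, of which there are only $d^*_\pi$, while mixing of $X$ lets each such preimage symbol actually be realized by extending to some preimage word of $w$. Thus the set of possible preimage symbols at any $a$-coordinate of any word in $Y$ is a fixed $d^*_\pi$-element subset of $\mathcal{A}$, independent of context.

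Given a doubly transitive $y \in Y$, fix $n_0 \in \Z$ with $y_{n_0} = a$. I would show the restriction map $x \mapsto x_{n_0}$ is a bijection from $\pi^{-1}(y)$ onto the $d^*_\pi$ preimage symbols of $a$. Surjectivity follows by extending any chosen symbol at $n_0$ to a bi-infinite preimage of $y$, using mixing of $X$ together with the extension lemma applied across all $a$-coordinates of $y$. For injectivity, suppose $x, x' \in \pi^{-1}(y)$ satisfy $x_{n_0} = x'_{n_0}$ but $x \neq x'$. Since $\pi$ is finite-to-one, $x$ and $x'$ must differ on both tails (otherwise the pair would realize a diamond and produce infinitely many preimages of $\pi(x)$). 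Then, using the infinitely many $a$-positions of $y$ on each side of $n_0$ and the SFT property of $X$ to splice $x$ and $x'$ at coordinates where their symbols match, one constructs arbitrarily many distinct preimages of $y$, contradicting finite-to-oneness of $\pi$.

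This yields $|\pi^{-1}(y)| = d^*_\pi$ for every doubly transitive $y$, so $\pi$ is almost invertible if and only if $d^*_\pi = 1$. The main obstacle is the injectivity argument: the splicing combinatorics require care in guaranteeing both admissibility in the SFT $X$ and distinctness of the spliced preimages from $x$ and $x'$, and this is the technical core of the proof (developed in detail in the Lind--Marcus references cited).
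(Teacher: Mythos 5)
The paper gives no proof of this Fact; it simply cites Lind--Marcus, Theorems 9.1.11 and 9.2.2, and your intermediate claim --- that every doubly transitive $y$ has exactly $d^*_{\pi}$ preimages --- is precisely the content of those theorems, so your overall route is the intended one. The easy ingredients are fine: the extension lemma $d^*_{\pi}(w,j)=d^*_{\pi}$ at every $a$-coordinate follows from the two inequalities you give (no mixing is needed, just restriction of preimages plus minimality of $d^*_{\pi}$), and together with a compactness/diagonal argument it yields $|\pi^{-1}(y)|\ge d^*_{\pi}$ for every $y$, which already gives the direction ``almost invertible $\Rightarrow d^*_{\pi}=1$'' that the paper actually uses. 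You are also right that the converse requires $\pi$ to be finite-to-one (with a diamond one can arrange $d^*_{\pi}=1$ while every doubly transitive point has uncountably many preimages), so flagging that implicit hypothesis is appropriate.

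The genuine gap is in the injectivity step. First, the assertion that two preimages $x\neq x'$ with $x_{n_0}=x'_{n_0}$ ``must differ on both tails'' does not follow from the absence of diamonds: no-diamond only forces the agreement set $A=\{n: x_n=x'_n\}$ to be an interval containing $n_0$, and that interval may be a half-line, i.e.\ $x$ and $x'$ may agree on all of $[n_0,\infty)$ and differ at every coordinate below some point (this occurs for finite-to-one codes that are not left-closing, and the Fact assumes no closing property). Second, the splicing scheme cannot finish the argument: splicing at two agreement coordinates with a disagreement between them is exactly a diamond, which is already excluded, while splicing within the interval $A$ produces at most two new preimages when $A$ is bounded and none at all when $A$ is a half-line --- never ``arbitrarily many.'' Ruling out these configurations at a doubly transitive point is the real content of Lind--Marcus Theorem 9.1.11, and it requires the finer ``routing'' property of magic words (between two occurrences of the magic word, the induced correspondence of fiber symbols at the magic coordinates is a bijection), not merely no-diamond plus splicing. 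As written, your injectivity argument would fail; you would need either to import that lemma or simply to cite the theorem, as the paper does.
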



Next, we define the left ideal class $\J(X)$ of a mixing SFT $X$ (for a full presentation, see \cite{BMT,LindMarcus}).
Given a subset $E$ of a ring $\mathfrak{R}$,  let $<E>_{\mathfrak{R}}$ denote the ideal in $\mathfrak{R}$ generated by $E$. Two ideals $\mathfrak{a}$ and $\mathfrak{b}$ are said to be equivalent, denoted $\mathfrak{a} \sim \mathfrak{b}$, if there exist nonzero $s,t$ in $\mathfrak{R}$ such that $ s \mathfrak{a}=t \mathfrak{b} $. 
Equivalence classes under $\sim$ are called \textit{ideal classes}. The ideal class of $\mathfrak{a}$ is denoted by $[\mathfrak{a}]_{\mathfrak{R}}$.

Let $X = X_A$ be a mixing SFT. Then $A$ has a Perron eigenvalue $\lambda_A$ and left eigenvector $v_A = (v_A^1,\dots,v_A^n)$. We assume without loss of generality that the entries of $v_A$ are all positive and contained in $\Z[\lambda_A] \subseteq \Z[1/\lambda_A]$. Then the left ideal class of $X$, denoted $\J(X)$, is defined as the ideal class of $< \{v_A^1 \dots, v_A^n\} >_{\Z[1/\lambda_A]}$. (Given $A,B$ there is an algorithm which decides whether $\J(X_A)=\J(X_B)$. This follows from the work of Kim and Roush showing decidability of shift equivalence \cite{KimRoush}, with a little more argument.) 


\begin{thm} \cite[Theorem 7.1]{BMT} \label{Thm:BMT} 
Let $X_A$ and $X_B$ be mixing SFTs with $\lambda = \lambda_A = \lambda_B$. Then the following are equivalent:
\begin{enumerate}
\item there exists a mixing SFT $X_C$ with $\lambda_C = \lambda$ and almost invertible, left-closing factor maps $\pi_A : X_C \to X_A$ and $\pi_B : X_C \to X_B$;
\item $\J(X_A) = \J(X_B)$ in $\Z[1/\lambda]$.
\end{enumerate}
\end{thm}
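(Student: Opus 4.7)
The plan is to prove the two implications separately, with the forward direction (1)$\Rightarrow$(2) reducing to an eigenvector calculation and the reverse direction (2)$\Rightarrow$(1) requiring an algebraic construction.

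For (1)$\Rightarrow$(2), I first recode so that each $\pi_A : X_C \to X_A$ is a one-block left-resolving factor map with symbol map $\Phi : \mathcal{A}_C \to \mathcal{A}_A$; this recoding is standard for left-closing codes. Applying the left Perron equation $v_C C = \lambda v_C$ and grouping columns of $C$ according to the fibers of $\Phi$, the left-resolving hypothesis collapses it to $v_A A = \lambda v_A$ with $v_A^a = \sum_{c \in \Phi^{-1}(a)} v_C^c$, so each $v_A^a$ lies in the ideal $\langle v_C^c : c \in \mathcal{A}_C \rangle_{\Z[1/\lambda]}$, giving one inclusion. For the reverse inclusion I use almost invertibility: by Fact \ref{Fact:MagicSymbol} we may assume (after a further recoding) that there is a magic symbol $a$ with a unique preimage $\Phi^{-1}(a) = \{c_0\}$, so $v_C^{c_0} = v_A^a \in \langle v_A \rangle_{\Z[1/\lambda]}$; the equation $v_C C = \lambda v_C$ together with the invertibility of $\lambda$ in $\Z[1/\lambda]$ then lets one propagate this identity from $c_0$ outward through the primitive graph, expressing every $v_C^{c'}$ as a $\Z[1/\lambda]$-combination of $v_A^a$'s and previously resolved $v_C^c$'s. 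Hence $\J(X_C) = \J(X_A)$, and the same argument for $\pi_B$ gives $\J(X_A) = \J(X_C) = \J(X_B)$.

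For (2)$\Rightarrow$(1), assume $\J(X_A) = \J(X_B)$, so there are nonzero $s, t \in \Z[1/\lambda]$ with $s \langle v_A \rangle = t \langle v_B \rangle$. Clearing denominators by multiplying by a suitable power of $\lambda$, I would translate this into non-negative integer matrix data: non-negative integer matrices $U, V$ which, up to overall scaling, satisfy $v_A = v_B U$ and $v_B = v_A V$ and are compatible with $A$ and $B$ in the dynamical sense (that is, they witness a shift equivalence of $A$ and $B$ over $\Z_+$). From such $(U,V)$ one then builds a non-negative integer matrix $C$ together with factorizations $C = R_A S_A = R_B S_B$ (or the appropriate block-triangular versions) from which one-block left-resolving factor maps $X_C \to X_A$ and $X_C \to X_B$ can be read off directly; left-resolvingness is enforced by the column-sum constraints $R_\star^T \mathbf{1} = \mathbf{1}$ built into the construction, and mixing of $X_C$ follows from primitivity of $C$, which can be arranged by an auxiliary amalgamation. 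Almost invertibility is obtained by arranging each factorization to contain a column with a single nonzero entry, producing a magic symbol.

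The main obstacle is direction (2)$\Rightarrow$(1): extracting a genuine matrix $C$ together with the two factorizations from purely algebraic ideal-class data, while simultaneously guaranteeing primitivity of $C$, left-resolvingness of both one-block maps, and the presence of a magic symbol on each side. An unadorned dimension-group level construction only delivers a common extension up to eventual conjugacy with no control on almost invertibility; the sharper equality of \emph{ideal classes} (not merely isomorphism of dimension groups) is exactly what should produce the magic-symbol columns. My strategy would be to first construct a weaker common extension via Williams-style state splittings witnessing the shift equivalence encoded by $(U,V)$, and then refine it by column amalgamations specifically designed to introduce singleton columns in $R_A$ and $R_B$, leveraging the equality $s \langle v_A \rangle = t \langle v_B \rangle$ to verify that such refinements exist within $\Z_+$.
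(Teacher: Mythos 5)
The paper offers no proof of this statement: it is quoted verbatim from \cite{BMT} (Theorem 7.1) and used as a black box, so your proposal can only be measured against the argument in that reference, and measured that way both directions have genuine gaps. In (1)$\Rightarrow$(2) you invoke the fiber-sum identity $v_A^a=\sum_{c\in\Phi^{-1}(a)}v_C^c$, but that is the relation attached to \emph{right}-resolving one-block codes acting on left eigenvectors. For a left-resolving code the correct intertwining is $AD^{T}=D^{T}C$, where $D(c,a)=1$ iff $\Phi(c)=a$; applied to left eigenvectors it says that the \emph{pullback} $c\mapsto v_A^{\Phi(c)}$ is a nonnegative left eigenvector of $C$ for $\lambda$, hence by Perron uniqueness $v_C^c=\kappa\,v_A^{\Phi(c)}$ for some $\kappa>0$. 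This yields $\J(X_C)=\J(X_A)$ at once (surjectivity of $\Phi$ on symbols suffices, and almost invertibility is not even needed for this direction). Your substitute ``propagation'' step is not a valid argument on its own terms either: the eigenvector equation only expresses $\lambda v_C^{c'}$ as a sum of entries over predecessors of $c'$, and knowing that such a sum lies in $\langle v_A\rangle$ says nothing about the individual summands, so you cannot resolve the entries ``outward from $c_0$'' one at a time.

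The more serious gap is in (2)$\Rightarrow$(1), which is the real content of the theorem. Your reduction of the ideal-class equality to ``a shift equivalence of $A$ and $B$ over $\Z_+$'' is false: equal entropy together with equal left ideal class is strictly weaker than shift equivalence. For instance, for $\lambda=2$ the ring $\Z[1/2]$ is a principal ideal domain, so there is exactly one ideal class, yet there are infinitely many shift equivalence classes of mixing SFTs of entropy $\log 2$, distinguished for example by the Bowen--Franks group $\Z^n/(I-A)\Z^n$. Hence the matrices $U,V$ you want to extract need not exist, and the Williams-style splitting you propose to build $C$ from them has no starting point. The ideal class records only the Perron-eigenvector part of the dimension module, and the construction in \cite{BMT} of a common almost invertible, left-closing extension works directly with that positive eigenvector data via a realization argument, not through shift equivalence; arranging primitivity of $C$, left-resolvingness of both factorizations, and a magic symbol on each side simultaneously is precisely where the work lies, and your sketch names these requirements without carrying any of them out.
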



%

\begin{rmk} We notice that Bowen's conjecture is true if (and only if) every mixing SFT is entropy-conjugate to its inverse. To see this, suppose  $X$ and $Y$ are mixing SFTs with $h(X) = h(Y) = \log \lambda$. By the Adler-Marcus Theorem \cite{AdlerMarcus}, there exists a mixing SFT $Z$ and almost-invertible factor maps 
$\pi_1 : (Z,\sigma_Z) \to (X,\sigma_X)$ and $\pi_2 : (Z,\sigma_Z) \to (Y,\sigma_Y^{-1})$ such that $\pi_1$ is 
left-closing and $\pi_2$ is right-closing. Then as a map from $(Z,\sigma_Z^{-1}) \to (Y,\sigma_Y)$, $\pi_2$ is left-closing.  So, as in our proof of Theorem \ref{Thm:MainResult}, there are sets $E_1,E_1'$ satisfying the conditions for entropy-conjugacy with $\pi_1: Z\setminus E_1  \to X\setminus E_1'$ a homeomorphism, and similarly there are sets $E_2,E_2'$ for $\pi_2$. Now suppose there are likewise sets $E_3,E_3'$ and a topological conjugacy $\phi: (Z\setminus E_3,\sigma) \to (Z \setminus E_3',\sigma^{-1})$ giving an entropy-conjugacy. Because the maps $\pi_1,\pi_2$ are continuous factor 
maps of compact systems 
\cite{OprochaZhang}
(or by an elementary exercise for our maps), we have $h_X(\pi_1E_3,\sigma_X)\leq h_Z(E_3,\sigma_Z)$ and  $h_X(\pi_2E_3',\sigma_Y)\leq h_Z(E_3',\sigma_Z^{-1})$. 
Consequently there is an entropy-conjugacy of $(X,\sigma_X)$ and $(Y,\sigma_y) $ given by \[
\pi_2\phi\pi_1^{-1}: (X\setminus (E'_1\cup \pi_1E_3),\sigma)\to 
(Y\setminus (E'_2 \cup \pi_2E_3'),\sigma)\ . 
\]

By the above argument, any pair of ideal classes can occur as the left and right ideal class of a mixing SFT.
\end{rmk}

\begin{rmk}
 Suppose $S$ and $T$ are mixing SFTs of equal entropy. By the Adler-Marcus Theorem (with the left-resolving map down to $S$), there exists $S'$, the complement of a Bowen-negligible set in $S$, and a continuous embedding $f$ from $S'$ into $T$. Switching roles, there exists $T'$, the complement of a Bowen-negligible set in $T$, and a continuous embedding $g$ from $T'$ into $S$. However, this does not prove there is a topological entropy-conjugacy of $S$ and $T$.

 One can deduce from these mutual continuous embeddings that any two mixing SFTs of equal entropy are Borel conjugate after discarding the periodic points (a countable set). But the conjugating map thus obtained is definitely not continuous and therefore does not answer Hochman's question or decide Bowen's conjecture.
\end{rmk}

\section{Proof of Theorem \ref{Thm:MainResult}} \label{Sect:Proof}

The basic idea is that by Theorem \ref{Thm:BMT}, if $h(X) = h(Y)$ and $\J(X) = \J(Y)$, then there exists a common almost invertible, left-closing extension $Z$ of $X$ and $Y$. We exclude certain subsets of $X,Y$, and $Z$ that have small entropy, and on the remainder the almost invertibility and the left-closing property imply that these three systems are topologically conjugate.

We now set some notation and state two easy facts.
For a subset $Y$ of $\Sigma(\mathcal{A})$, let
\begin{equation*}
 W_n(Y) = \{ w \in \mathcal{A}^n : \exists y \in Y,  \, y[0,n-1] = w\}.
\end{equation*}


\begin{lem} \label{Lemma:Pesin}
 Let $X$ be an SFT, and let $Y \subset X$. Then 
\begin{equation*}
 h_X(\sigma_X, Y) \leq \lim_n \frac{1}{n} \log | W_n(Y) |.
\end{equation*}
\end{lem}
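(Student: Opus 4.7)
The plan is to use the definition of Bowen's entropy directly, exploiting the fact that $X$ is a subshift so that cylinder sets form a natural base of clopen sets refining any given open cover.

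First I would fix an arbitrary finite open cover $\mathcal{U}$ of $X$ and show that $h_{\mathcal{U}}(\sigma_X,Y) \leq \limsup_n \frac{1}{n}\log|W_n(Y)|$; since the desired inequality then follows by taking the supremum over $\mathcal{U}$. Since cylinder sets form a base for the topology of $X$ and $X$ is compact, a Lebesgue-number argument yields an integer $k \geq 1$ such that every cylinder $[u]:=\{x\in X : x[0,k-1]=u\}$ with $u\in W_k(X)$ is contained in some element of $\mathcal{U}$.

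Next, for each $n\geq k$, I would cover $Y$ by the family $\{E_w\}_{w\in W_n(Y)}$, where $E_w=\{x\in X : x[0,n-1]=w\}$. For any $0\leq j \leq n-k$, the image $\sigma_X^j(E_w)$ is contained in a cylinder on the $k$-block $w_j\cdots w_{j+k-1}$, hence in some element of $\mathcal{U}$. Thus $n_{\mathcal{U}}(E_w)\geq n-k+1$. Given $N$, choose $n$ with $n-k+1\geq N$; the family $\{E_w\}$ then satisfies the constraint in the definition of $m_{\mathcal{U},\lambda}$, and I get the bound
\begin{equation*}
\inf\Bigl\{\sum_i e^{-\lambda n_{\mathcal{U}}(E_i)} : \bigcup_i E_i\supset Y,\ \inf_i n_{\mathcal{U}}(E_i)\geq N\Bigr\}\leq |W_n(Y)|\,e^{-\lambda(n-k+1)}.
\end{equation*}

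Let $\alpha:=\limsup_n \frac{1}{n}\log|W_n(Y)|$ and fix $\lambda>\alpha$. Then $|W_n(Y)|e^{-\lambda n}\to 0$, so letting $n\to\infty$ (and therefore $N\to\infty$) gives $m_{\mathcal{U},\lambda}(Y)=0$. Hence $h_{\mathcal{U}}(\sigma_X,Y)\leq \lambda$ for every such $\lambda$, yielding $h_{\mathcal{U}}(\sigma_X,Y)\leq \alpha$, and then the supremum over $\mathcal{U}$ gives the lemma (with $\lim$ read as $\limsup$, which is the only reading for which the right-hand side is always defined).

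There isn't really a hard step here — the only subtle point is producing the integer $k$ refining $\mathcal{U}$ by cylinder sets, which is routine for subshifts. The rest is bookkeeping with the definition of $m_{\mathcal{U},\lambda}$.
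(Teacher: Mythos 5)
The paper states this lemma without proof (it is offered as one of ``two easy facts''), so the only question is whether your argument stands on its own. It almost does, but the step you dismiss as ``routine'' is exactly where it fails for two-sided shifts. Since $X \subset \mathcal{A}^{\mathbb{Z}}$ is two-sided, the cylinder $[u]=\{x \in X : x[0,k-1]=u\}$ does \emph{not} have small diameter, however large $k$ is: it puts no constraint on the coordinates $x_i$ with $i<0$. Concretely, let $X$ be the full $2$-shift and $\mathcal{U}=\{\{x: x_{-1}=0\},\,\{x:x_{-1}=1\}\}$; then no set $\{x : x[0,k-1]=u\}$ is contained in any element of $\mathcal{U}$, so the integer $k$ you need does not exist. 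In that case $E_w \nprec \mathcal{U}$, hence $n_{\mathcal{U}}(E_w)=0$ by the paper's definition, your covers violate the constraint $\inf_i n_{\mathcal{U}}(E_i)\geq N$, and the estimate collapses. A Lebesgue-number argument only yields an $m$ such that every \emph{centered} cylinder $\{x : x[-m,m]=v\}$ lies in an element of $\mathcal{U}$.

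The fix is standard and preserves the rest of your computation. Given such an $m$, cover $Y$ by the sets $C_v=\{x\in X : x[-m,n-1+m]=v\}$ with $v$ ranging over the words of length $n+2m$ realized as $y[-m,n-1+m]$ for some $y\in Y$; there are at most $|\mathcal{A}|^{2m}\,|W_n(Y)|$ of them, since the central block of such a $v$ lies in $W_n(Y)$. For $0\leq j\leq n-1$ the set $\sigma_X^j(C_v)$ determines the coordinates $[-m,m]$, hence lies in an element of $\mathcal{U}$, so $n_{\mathcal{U}}(C_v)\geq n-1$ and
\[
\sum_v e^{-\lambda n_{\mathcal{U}}(C_v)} \;\leq\; |\mathcal{A}|^{2m}\,|W_n(Y)|\,e^{-\lambda(n-1)}.
\]
The constant $|\mathcal{A}|^{2m}$ is independent of $n$, so your limiting argument for $\lambda>\limsup_n\frac1n\log|W_n(Y)|$ goes through verbatim from this point. (Your reading of the right-hand side as a $\limsup$ is correct, and is all that is needed where the lemma is applied.)
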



\begin{lem} \label{Lemma:BnEstimate}
 Let $X$ be an SFT with entropy $h(X) = \log \beta$. Then there exists a polynomial $p(x)$ such that
\begin{equation*}
 |W_n(X)| \leq p(n) \beta^n.
\end{equation*}
\end{lem}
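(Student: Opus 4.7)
The plan is to pass to a one-step transition matrix presentation of $X$ and then apply the Jordan normal form to that matrix.

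First, I would recode $X$ to a one-step presentation, so that $X = X_A$ for some $0$-$1$ transition matrix $A$ indexed by a finite alphabet $\mathcal{A}$. Recoding alters the word-counting sequence $|W_n(X)|$ by at most a constant multiplicative factor and a bounded shift in the index $n$, so a polynomial-times-$\beta^n$ bound for $X_A$ transfers immediately to $X$. In the one-step setting, $|W_n(X_A)|$ equals the sum of entries of $A^{n-1}$, equivalently $\mathbf{1}^T A^{n-1} \mathbf{1}$, which is at most $|\mathcal{A}|^2 \max_{i,j} A^{n-1}(i,j)$. Since for an SFT defined by a non-negative integer transition matrix one has $h(X_A) = \log \rho(A)$ (by Gelfand's formula applied to $|W_n|$, which is comparable to any matrix norm of $A^{n-1}$), the hypothesis $h(X) = \log \beta$ gives $\rho(A) = \beta$.

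Next, I would bound the entries of $A^n$ via the Jordan normal form. Writing $A = PJP^{-1}$, a Jordan block of size $k$ for an eigenvalue $\mu$ contributes entries of $J^n$ of the form $\binom{n}{j}\mu^{n-j}$ with $0 \leq j \leq k-1$, each dominated by a constant times $n^{k-1} |\mu|^n \leq n^{k-1} \beta^n$. Summing over all blocks and conjugating back by $P$, every entry of $A^n$ is at most a polynomial in $n$ times $\beta^n$; combining with the estimate $|W_n(X_A)| \leq |\mathcal{A}|^2 \max_{i,j} A^{n-1}(i,j)$ produces the required polynomial bound.

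The hard part will not really be hard: this is standard Perron--Frobenius and Jordan decomposition for a non-negative integer matrix. The only items requiring a little care are the bookkeeping in the recoding step, so the polynomial-times-$\beta^n$ form survives, and the observation that even when $A$ is reducible with several Jordan blocks associated to eigenvalues of modulus $\beta$, the polynomial degree in the final bound is controlled by the size of the largest such block, a fixed integer depending only on $A$.
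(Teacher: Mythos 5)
Your proof is correct, and it fills in what the paper leaves implicit: the lemma is stated there as one of ``two easy facts'' with no proof given. The Jordan-form bound on the entries of $A^{n-1}$ (together with $h(X_A)=\log\rho(A)$ and the harmless index shift under recoding to a one-step presentation) is exactly the standard argument the authors are relying on, and your handling of the reducible case --- where the polynomial factor is genuinely needed --- is the right level of care.
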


\vspace{2mm}

\begin{PfofThmMain}{}
Suppose $X$ and $Y$ are mixing SFTs such that $h(X) = h(Y) = \log \lambda$ and $\J(X) = \J(Y)$. Then by Theorem \ref{Thm:BMT},
there exists a mixing SFT $Z$ and almost invertible, left-closing factor maps $\pi_1 : Z \to X$ and $\pi_2 : Z \to Y$. We assume without loss of generality that $\pi_1$ is one-block with a magic symbol $a$ and $\pi_2$ is one-block with a magic symbol $b$  (using Fact \ref{Fact:MagicSymbol} and recoding). 


For $n_0$ in $\Z$, define
\begin{equation*}
E_a(n_0) = \{ z \in Z :  \# \{ n \geq n_0 : z_n = a \} = 0 \}. 
\end{equation*}
Let $Z(a)$ be the SFT obtained by forbidding the symbol $a$ from $Z$. Let $h(Z(a)) = \log \beta$, and note that $h(Z(a)) < h(Z)$ by 
\cite[Corollary 4.4.9]{LindMarcus}. By Lemmas  \ref{Lemma:Pesin} and \ref{Lemma:BnEstimate}, there exists a polynomial $p(x)$ such that
\begin{align} \label{Eqn:MainEstimate}
\begin{split}
h_Z(\sigma_Z, E_a(n_0)) & \leq  \lim_n \frac{1}{n} \log( | W_n(E_a(n_0)) | ) \\ & \leq \lim_n \frac{1}{n} \log\biggl(  p(n) \lambda^{\max(0,n_0)} \beta^n \biggr) = h(Z(a)).
\end{split}
\end{align}
Let $E_a = \cup_{n_0 \in \Z} E_a(n_0)$. Then by Proposition \ref{Prop:Basic} and (\ref{Eqn:MainEstimate}),
\begin{equation}\label{Eqn:Ea}
h_Z(\sigma_Z, E_a) = \sup_{n_0} \biggl\{ h_Z(\sigma_Z, E_a(n_0)) \biggr\} \leq h(Z(a)) < h(Z).
\end{equation}
Define $E_b$ and $Z(b)$ analogously. By the analogous argument, we have that
\begin{equation} \label{Eqn:Eb}
 h_Z(\sigma_Z, E_b) \leq h(Z(b))< h(Z).
\end{equation}

Now let $E= E_a \cup E_b$. By Proposition \ref{Prop:Basic}, (\ref{Eqn:Ea}), and (\ref{Eqn:Eb}), we have that
\begin{equation*}
h_Z(\sigma_Z, E) = \max( h_Z(\sigma_Z, E_a), h_Z(\sigma_Z, E_b)) < h(Z).
\end{equation*} Furthermore, define $B_X = \pi_1(E)$ and $B_Y = \pi_2(E)$. Since entropy of a continuous map defined on a metrizable compact space cannot increase under a factor map,
\begin{equation} \label{Eqn:BX}
 h_X(\sigma_X, B_X) \leq h_Z(\sigma_Z, E) < h(Z)= h(X),
\end{equation}
and similarly, 
\begin{equation} \label{Eqn:BY}
 h_Y(\sigma_Y, B_Y) \leq h_Z(\sigma_Z, E) < h(Z) = h(Y)
\end{equation}
Set $X' = X \setminus B_X$, $Y' = Y \setminus B_Y$, and $Z' = Z \setminus E$. Note that $\sigma_X(X') \subset X'$, $\sigma_Y(Y') \subset Y'$, and $\sigma_Z(Z') \subset Z'$.

Let us prove that $\pi_1|_{Z'}$ is a homeomorphism between $Z'$ and $X'$. Since $\pi_1$ is continuous, $\pi|_{Z'}$ is continuous. Let $x$ be in $X'$. Then $x \notin \pi_1(E)$. Since $\pi_1$ is onto, we conclude that $x \in \pi_1(Z')$, and therefore $X' \subset \pi_1(Z')$. Now let $x$ be in $\pi_1(Z')$, \textit{i.e.} suppose there exists $z$ in $Z'$ such that $\pi_1(z) = x$. Since $z \notin E$, there exist $n_k \to \infty$ such that $z_{n_k} = a$. Since $a$ is a magic letter, this fact implies that $z[n_0,\infty)$ is uniquely determined by $x$. Furthermore, since $\pi_1$ is left-closing, we see that $z$ is uniquely determined by $x$. Hence $\pi_1^{-1}(x) = \{z\}$ and $x\notin\pi_1(E)$. From this argument, we draw two conclusions:
\begin{enumerate}
 \item $X' = \pi_1(Z')$, and
 \item $\pi_1|_{Z'}$ is injective.
\end{enumerate}
Thus, we have shown that $\pi_1|_{Z'}$ is a continuous bijection from $Z'$ onto $X'$. Furthermore, we claim that $\pi_1|_{Z'}^{-1}$ is continuous. To see this, note that $\pi_1$ is left-closing if and only if there exists $K$ such that whenever $\pi_1(z[-K,K]) = \pi_1(z'[-K,K])$ and $z[0,K] = z'[0,K]$, then $z_1 = z'_1$ \cite[Proposition 8.1.9]{LindMarcus}. Now let $x$ be in $X'$. It suffices to show that there exists an $N = N(x)$ such that $x[-N,N]$ determines $\pi_1^{-1}(x)_0$. Since $x$ is in $X'=\pi_1(Z')$, there exists $n_k \to \infty$ such that $x_{n_k} = a$. Choose $n_k$ such that $n_k - n_1 \geq K$. Since $a$ is a magic symbol, $\pi_1^{-1}(x)[n_1,n_k]$ is determined by $x[n_1,n_k]$. Furthermore, since $\pi_1$ is left-closing and $n_k - n_1 \geq K$, we see that $\pi_1^{-1}(x)[0,n_k]$ is determined by $x[-K,n_k]$. Thus, $\pi_1|_{Z'}^{-1}$ is continuous.

By the analogous argument, we have that $\pi_2|_{Z'}$ is a homeomorphism from $Z'$ onto $Y'$. Now let $\pi = \pi_2 \circ \pi_1^{-1}|_{X'}$, which is then a homeomorphism from $X'$ onto $Y'$. Combining this fact with (\ref{Eqn:BX}) and (\ref{Eqn:BY}), we obtain that $\pi$ is an entropy-conjugacy between $X$ and $Y$.
\end{PfofThmMain}

\section*{Acknowledgements}

MB was supported by the Danish National Research Foundation through
the Centre for Symmetry and Deformation (DNRF92). 
JB was partially supported by ANR grant DynNonHyp (BLAN08-2$\underline{\;}$313375).
KM gratefully acknowledges the support of NSF-DMS grant number 10-45153.

\bibliographystyle{plain}
\bibliography{BowenEntConj_Refs}





\end{document}